\documentclass[11pt,reqno]{article}
\usepackage[utf8]{inputenc}
\usepackage[margin=2.8cm]{geometry}
\usepackage{macros}
\newcommand{\dk}{\mathrm{d}_\mathrm{K}}
\newcommand{\wk}{\mathrm{w}_{\mathrm{K}}}

\usepackage{setspace}
\setstretch{1.1}

\title{\textbf{$t$-Balanced Codes with the Kendall-$\tau$ Metric}}
\author{Benjamin Jany, Alberto Ravagnani}
\date{}

\begin{document}

\maketitle

\begin{abstract}
    We investigate the maximum cardinality and the mathematical structure of error-correcting codes endowed with the Kendall-$\tau$ metric. We establish an averaging bound for the cardinality of a code with prescribed minimum distance, discuss its sharpness, and characterize codes attaining it. This leads to introducing the family of $t$-balanced codes in the 
    Kendall-$\tau$ metric. The results are based on novel arguments that shed new light on the structure of the Kendall-$\tau$ metric space.
\end{abstract}


\bigskip

\section{Introduction}
The Kendall-$\tau$ distance is a measure for the discrepancy between two permutations of the set $\{1, \ldots,n\}$. It counts the number of transpositions needed to transform one permutation into another.

The Kendall-$\tau$ distance naturally arises in various contexts within applied mathematics. For instance, it was introduced in statistics to measure the similarity of data samples, in the context of rank correlation~\cite{forthofer1981rank}.
In coding theory, codes endowed with the 
Kendall-$\tau$ distance
were proposed for rank modulation schemes, which are relevant 
in the context of flash memories
~\cite{chadwick1969rank,barg2010codes}.
Applications in iterative voting have been considered in~\cite{farnoudconsensus13,koolyk2016convergence}.
In these contexts, codes are collection of permutations where the Kendall-$\tau$ between each two of them is bounded from below by a given integer.

This paper focuses on the structural properties of the Kendall-$\tau$ space and on the properties of its error-correcting codes. In the literature, two major, complementary research directions can be found in this context: 1) establishing upper bounds for the cardinality of a code with prescribed minimum Kendall-$\tau$ distance; and 2) 
constructing large codes with 
prescribed minimum Kendall-$\tau$ distance. 
For instance, several bounds were established in \cite{barg2010codes}, 
which also contains 
a proof of the existence of codes that correct a constant number of errors and have size within a constant factor of the sphere packing bound. 
Bounds are derived also in~\cite{kong2012comparing,sarit,jiangcorrecting10, nguyen2024improving,vijayakumaran2016largest, buzaglo2015bounds, wang2021nonexistence}. Constructions of error-correcting codes in the Kendall-$\tau$ metric were presented  in \cite{mazumdar2012constructions}, with
an asymptotic analysis and a construction of families of rank modulation codes that can correct a number of errors that grow with $n$ at varying rates, from $\Theta(n)$ to $\Theta(n^2)$. 
In \cite{zhou2015}, the authors construct systematic error-correcting codes and more constructions are given in \cite{zhang2015snake}.

In this paper, we bring forward the structural theory of codes endowed with the 
Kendall-$\tau$ metric. We start by surveying the main properties of the 
Kendall-$\tau$ distance, which we need throughout the paper. We then establish a new bound for codes with the 
Kendall-$\tau$ metric, based on an averaging argument, which we call the \textit{averaging bound} (see Section~\ref{sec:averaging}). We compare the bound with the state of the art and introduce the class of $t$-balanced codes (those meeting the averaging bound with equality). We then discuss the existence of 
$t$-balanced codes and characterize them for the parameters where they exist (Section~\ref{sec:exist}). 

\paragraph*{Acknowledgement.}
Benjamin Jany is supported by the Dutch Research Council via grant VI.Vidi.203.045.
Alberto Ravagnani is supported by the Dutch Research Council via grants VI.Vidi.203.045 and OCENW.KLEIN.539, and by the Royal Academy of
Arts and Sciences of the Netherlands.

\section{Codes for the Kendall-$\tau$ Metric}

In this section, we recall basic definitions and properties of the Kendall-$\tau$ metric and of codes endowed with that metric. We start by establishing the notation for this paper.

\begin{notation}
We denote by $\mbS_n$ the group of permutations of $n$ elements. The identity element of~$\mbS_n$ is denoted as $\epsilon$. We use both single line and cycle notation (with fixed points omitted). The single line notation will be written with square brackets, i.e.,  $[ \sigma(1) \, \sigma(2) \, \ldots \sigma(n)]$, whereas the cycle notation will be written with round brackets. For all $\sigma, \tau \in \mbS_n$, the product of permutation is denoted by $\sigma  \tau$ and is defined by $(\sigma  \tau) (i) = \tau(\sigma(i))$ for all $1 \leq i \leq n$.
\end{notation} 

\begin{definition}
The \emph{Kendall-$\tau$ distance} between $\sigma, \tau \in \mbS_n$, denoted $\dk(\sigma, \tau)$, is the minimum number of consecutive transposition required to obtain $\sigma$ from $\tau$. 
\end{definition}

It is well known that the Kendall-$\tau$ distance can be characterized as follows;
we refer to~\cite{jiangcorrecting10} for the proof. 

\begin{theorem}\label{thm:distequiv}
Let $\sigma, \tau \in \mbS_n$. We have 
$$\dk(\sigma, \tau) = \left| \{(i,j) \, : \, i \neq j, \,  \sigma^{-1}(i)< \sigma^{-1}(j), \, \tau^{-1}(i) > \tau^{-1}(j)\}\right|. $$
\end{theorem}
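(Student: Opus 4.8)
The plan is to write the right-hand side as $D(\sigma,\tau)$ and to read it as the number of \emph{discordant pairs} of $\sigma$ and $\tau$: the unordered pairs of values $\{i,j\}$ that appear in one relative order in $\sigma$ and in the opposite relative order in $\tau$. (One checks at the outset that the counted ordered pairs biject with such unordered pairs, so $D$ is symmetric and $D(\sigma,\sigma)=0$.) I would then prove the two inequalities $\dk(\sigma,\tau)\ge D(\sigma,\tau)$ and $\dk(\sigma,\tau)\le D(\sigma,\tau)$ separately.

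For the lower bound, the key observation is that a single consecutive transposition, applied to a permutation, swaps the two entries occupying some pair of neighbouring positions and leaves every other pair of positions untouched; hence it reverses the relative order of exactly one pair of values and preserves the relative order of all others. Consequently each consecutive transposition changes $D(\sigma,\,\cdot\,)$ by exactly $\pm 1$. Since transforming $\tau$ into $\sigma$ must drive the count from $D(\sigma,\tau)$ down to $D(\sigma,\sigma)=0$, at least $D(\sigma,\tau)$ transpositions are needed, giving $\dk(\sigma,\tau)\ge D(\sigma,\tau)$.

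For the upper bound, I would exhibit an explicit sorting sequence of exactly $D(\sigma,\tau)$ transpositions, by a bubble-sort argument. Given the current permutation $\pi$ (initially $\pi=\tau$), consider the word $\sigma^{-1}(\pi(1)),\ldots,\sigma^{-1}(\pi(n))$; this word is the identity $1,2,\ldots,n$ precisely when $\pi=\sigma$, and its number of inversions equals $D(\sigma,\pi)$. The crucial step is to show that whenever $\pi\neq\sigma$ the word has a \emph{descent}, i.e.\ an index $k$ with $\sigma^{-1}(\pi(k))>\sigma^{-1}(\pi(k+1))$, since otherwise the word would be strictly increasing and hence equal to the identity, forcing $\pi=\sigma$. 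Swapping the entries in positions $k$ and $k+1$ is a consecutive transposition that removes this single inversion and creates no new one, so it lowers $D(\sigma,\pi)$ by exactly $1$. Iterating reaches $\sigma$ after exactly $D(\sigma,\tau)$ transpositions, whence $\dk(\sigma,\tau)\le D(\sigma,\tau)$.

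I expect the main obstacle to be the upper bound, and specifically the claim that the absence of an adjacent discordant pair forces $\pi=\sigma$ — the statement underlying the correctness of bubble sort, equivalently the fact that the consecutive transpositions generate $\mbS_n$ with minimal word length counted by inversions. A convenient streamlining is to first note that both $\dk$ and $D$ are invariant under left-multiplication by a common permutation, reduce to the case $\tau=\epsilon$, and run the same sorting argument on a single permutation; this does not remove the central difficulty but isolates it cleanly.
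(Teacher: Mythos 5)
Your argument is correct and complete. Note that the paper does not actually prove this statement: it records it as well known and refers to the literature for the proof, so there is nothing internal to compare against. What you give is the standard argument, and it is sound: the opening observation that the ordered pairs in the displayed set biject with discordant unordered pairs makes the count $D$ symmetric with $D(\sigma,\sigma)=0$; the lower bound follows because an adjacent swap reverses the relative order of exactly one pair of values and hence moves $D(\sigma,\cdot)$ by exactly $\pm 1$; and the upper bound is the bubble-sort construction, where the key step --- a permutation word with no descent is strictly increasing, hence the identity --- is exactly the right thing to isolate and is proved, not just asserted. The optional reduction to $\tau=\epsilon$ via composing both permutations with a common one is also fine (only beware that, under the paper's convention $(\sigma\tau)(i)=\tau(\sigma(i))$, what you call left-multiplication is written as multiplication on the right there; this is purely notational and does not affect the argument).
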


The Kendall-$\tau$ metric is right invariant, meaning that for all $\sigma, \tau, \pi \in \mbS_n$ we have \begin{equation} \label{RI}
    \dk(\sigma, \tau) = \dk(\sigma \alpha, \tau \alpha).
\end{equation} A proof of this fact can be found  \cite{huangmetrics}.

\begin{definition}
A  \emph{code} is a subset $\mC \subseteq \mbS_n$ with 
$|\mC|\ge 2$ and its elements are called \emph{codewords}. In this paper, codes are endowed with the 
Kendall-$\tau$ metric. The \emph{minimum distance} of a code $\mC$ is the integer $\dk(\mC) := \min\{\dk(\sigma, \tau) \,: \sigma, \tau \in \mC \textup{ and } \sigma \neq \tau\}$.
For all $\sigma \in \mbS_n$, the \emph{weight} of $\sigma$ is  $\wk(\sigma) := \dk(\sigma, \epsilon) = \dk(\epsilon, \sigma)$. 
 The inversions of $\sigma \in \mbS_n$ are the elements of the set $I_{\sigma} := \{ (i,j) \in [n] \, : \, i > j \textup{ and } \sigma^{-1}(i) < \sigma^{-1}(j)\}$.
Using \cref{thm:distequiv}, it can be seen that the weight of a permutation is equal to the number of inversion of said permutation. i.e $\wk(\sigma) = |I_{\sigma}|$ for all $\sigma \in \mbS_n$.
\end{definition}

The Kendall-$\tau$ distance between permutations can be computed by only looking at their inversions, as described in the following result. 

\begin{proposition}\label{prop:distsupp}
Let $\sigma, \tau \in \mbS_n$. We have 
$$\dk(\sigma, \tau) = \left| (I_{\sigma} \cup I_{\tau}) \setminus (I_{\sigma} \cap I_{\tau}) \right|.$$
\end{proposition}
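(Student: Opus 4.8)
The plan is to show that both sides count the same combinatorial quantity, namely the number of \emph{discordant} unordered pairs: pairs $\{i,j\}$ with $i \neq j$ whose relative order is reversed when passing from $\sigma$ to $\tau$. I would first reinterpret the right-hand side. Since $(I_\sigma \cup I_\tau)\setminus(I_\sigma \cap I_\tau)$ is exactly the symmetric difference $I_\sigma \triangle I_\tau$, and since every element of $I_\sigma$ or of $I_\tau$ is by definition an ordered pair $(i,j)$ with $i>j$, this set contains a unique representative per unordered pair. For fixed $i>j$, the pair $(i,j)$ lies in $I_\sigma \triangle I_\tau$ precisely when it is an inversion of exactly one of $\sigma, \tau$, that is, when exactly one of $\sigma^{-1}(i)<\sigma^{-1}(j)$ and $\tau^{-1}(i)<\tau^{-1}(j)$ holds. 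This is exactly the condition that $i$ and $j$ appear in opposite relative orders in $\sigma$ and $\tau$, so $|I_\sigma \triangle I_\tau|$ equals the number of discordant pairs.

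Next I would analyze the left-hand side using \cref{thm:distequiv}, which expresses $\dk(\sigma,\tau)$ as the number of \emph{ordered} pairs $(i,j)$ with $i\neq j$, $\sigma^{-1}(i)<\sigma^{-1}(j)$ and $\tau^{-1}(i)>\tau^{-1}(j)$. The key observation is that for any unordered pair $\{i,j\}$ at most one of its two orderings $(i,j)$, $(j,i)$ can satisfy this condition, since $\sigma^{-1}(i)<\sigma^{-1}(j)$ and $\sigma^{-1}(j)<\sigma^{-1}(i)$ are mutually exclusive. Moreover, exactly one ordering satisfies the condition precisely when the pair is discordant. Hence the formula of \cref{thm:distequiv} also counts the discordant unordered pairs, contributing one ordered pair per such pair.

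Combining the two computations yields $\dk(\sigma,\tau)=|I_\sigma \triangle I_\tau|=\left|(I_\sigma \cup I_\tau)\setminus(I_\sigma \cap I_\tau)\right|$, which is the claim.

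The step I expect to require the most care is the bookkeeping between ordered and unordered pairs. The definition of $I_\sigma$ breaks the $(i,j)\leftrightarrow(j,i)$ symmetry by insisting $i>j$, whereas \cref{thm:distequiv} ranges over all ordered pairs. I would therefore verify explicitly that each discordant unordered pair contributes exactly one element to $I_\sigma \triangle I_\tau$ and exactly one ordered pair to the set in \cref{thm:distequiv}, so that neither double counting nor omission can occur. Once this correspondence is pinned down, the remainder is a routine translation between the \emph{relative order} and \emph{inversion} descriptions of the two permutations.
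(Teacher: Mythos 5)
Your proof is correct and follows essentially the same route as the paper's: both arguments amount to the observation that each unordered pair $\{i,j\}$ whose relative order is reversed between $\sigma$ and $\tau$ contributes exactly one element to $I_{\sigma}\,\triangle\,I_{\tau}$ (the ordering with $i>j$) and exactly one ordered pair to the set counted in \cref{thm:distequiv}. The paper phrases this as a pair of injections giving $|I|\le|I'|$ and $|I'|\le|I|$, while you route both counts through the common quantity of discordant pairs, but the underlying correspondence is identical.
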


\begin{proof}
Let $I = (I_{\sigma} \cup I_{\tau}) \setminus (I_{\sigma} \cap I_{\tau})$ and $I':= \{(i,j) \, : \, i \neq j \,, \,  \sigma^{-1}(i)< \sigma^{-1}(j) \, \textup{ and } \tau^{-1}(i) > \tau^{-1}(j)\}$. We will show that $|I| = |I'|$. Let $(i,j) \in I$. If $(i,j) \in I_{\sigma} \setminus I_{\tau}$, then $\sigma^{-1}(i) < \sigma^{-1}(j)$ but $\tau^{-1}(i) < \tau^{-1}(j)$, hence $(i,j) \in I'$. On the other hand, if $(i,j) \in I_{\tau} \setminus I_{\sigma}$, then  $\sigma^{-1}(j) < \sigma^{-1}(j)$ but $\tau^{-1}(i) > \tau^{-1}(i)$, hence $(j,i) \in I'$. Therefore $|I| \le |I'|$. Now let $(i,j) \in I'$. If $i>j$ then $(i,j) \in I_{\sigma} \setminus I_{\tau}$. Whereas if $i < j$ then $(j,i) \in I_{\tau} \setminus I_{\sigma}$. Hence $|I'| \le |I|$, which concludes the proof. 
\end{proof}

A central, unsolved problem in the study of codes with the Kendall-$\tau$ metric is to determine the maximum cardinality of a code with given minimum distance. 
Various bounds for such cardinality have been established and we list a few of them below. In the first two bounds, we denote by $B_{\rmK}(n,\ell)$ be the ball of radius $\ell$ in $\mbS_n$ under the Kendall-$\tau$ metric:

$$B_{\rmK}(n , \ell)=\{\tau \in \mbS_n \, : \, \dk(\epsilon,\tau) \le \ell\}.$$

\begin{itemize}
    \item[(i)] \textbf{Sphere Packing bound}; see \cite[Thm 1]{sarit}. Let $\mC \subseteq \mbS_n$ be a code of minimum distance $d$. We have 
\begin{equation}\label{eq:spherepacking} 
|\mC| \leq \frac{n!}{|B_{\rmK}(n,\lfloor (d-1)/2 \rfloor)|}.
\end{equation}

\item[(ii)] \textbf{Gilbert-Varshamov-like bound}; see \cite[Thm 13]{jiangcorrecting10}.
 Let $n,M,d$ be positive integers. If
\begin{equation}M \leq \frac{n!}{|B_K(n,d-1)|},\end{equation}
then there exist a code $\mC \subseteq \mbS_n$ of minimum distance $d$ with $|\mC| = M$.
  
\item[(iii)] \textbf{Singleton-type bound}; see \cite[Thm 14]{jiangcorrecting10}. Let $\mC \subseteq \mbS_n$ with minimum distance $d$ and let $1 \leq t \leq n-2$ be an integer. \begin{itemize}
    \item[(1)] If $t$ is the smallest integer with $|\mC| > \frac{n!}{t!}$, then 
    \begin{equation}\label{eq:singl1}
    \dk(\mC) \leq \binom{t}{2}.\end{equation}
    \item[(2)] If $|\mC| = \frac{n!}{t!}$, then
    \begin{equation}\label{eq:singl2}
    \dk(\mC)  \leq \binom{t}{2} +1.\end{equation}
\end{itemize}
\end{itemize}

Different methods can be used to derive the bounds above. For example, \eqref{eq:spherepacking} can be established from a code-anticode approach, whereas \eqref{eq:singl1} and \eqref{eq:singl2} were derived by considering arrays of permutations.
In this paper we derive a new bound using an averaging argument and the notion of puncturing.

\section{Averaging Bound and $t$-Balanced Codes} \label{sec:averaging}

In this section we establish a new bound on the maximum cardinality of a code under the Kendall-$\tau$ metric, given the length of the code and its minimum distance. 
To do so we use an averaging argument on the number of codewords with a given first entry. We will furthermore need the operation of puncturing (introduced in \cite{zhou2015}) which we recall below. 

\begin{definition}
For a set $S \subseteq [n]$ with $|S|=s$ and $\sigma \in \mbS_n$. Label the elements of $S$ as $i_1 <  \cdots < i_s$. Let 
 $\sigma|_S \in \mbS_s$ be the permutation such that for all $j, l \in [s]$, $\sigma|_S(j) < \sigma|_S(l)$ if and only if $\sigma(i_j) < \sigma(i_l)$; see for instance \cref{ex:punct}.
We call 
$\sigma|_S$ the $S$-\emph{puncturing} of $\sigma$. 
   For a code $\mC \subseteq \mbS_n$ and $S \subseteq [n]$,  the $S$-\emph{puncturing} of $\mC$ is the code $\mC|_S = \{\sigma|_S \, : \, \sigma \in \mC\} \subseteq \mbS_s$, where $s=|S|$.
\end{definition}

\begin{example}\label{ex:punct}
    Let $\sigma := \begin{bmatrix}6 & 1 & 3 & 5 & 2& 4\end{bmatrix}\in \mbS_6$ and $S = \{3, 5, 6\}$. Then  $\sigma|_S = \begin{bmatrix}2 &1 &3\end{bmatrix}$.
\end{example}

We now investigate how the distance between permutations changes after puncturing. We start by introducing some symbols.

\begin{notation}
    For $\sigma \in \mbS_n$, let 
    \begin{itemize}
        \item $I_{\sigma, <i} := \{(i,j) \, : (i,j) \in I_{\sigma}\}$,
        \item $I_{\sigma, >i} := \{(j,i) \, : (j,i) \in I_{\sigma}\}$,
        \item $I_{\sigma, i} = I_{\sigma, <i} \cup I_{\sigma, >i}$,
        \item $I_{\sigma, \neq i} := I_{\sigma} \setminus S_{\sigma, i}$.
    \end{itemize}
Furthermore let $\psi_i: [n]\setminus \{i\} \rightarrow [n-1]$ be the map defined by 
$$\psi_i(j) = \begin{cases}
    j & \textup{ if } j < i \\
    j-1 & \textup{ if } j > i.
\end{cases}$$
\end{notation}

We now relate the number of inversions of an $S$-puncturing in terms of its original number of inversions, where $S$ is the complement of a singleton.

\begin{lemma}\label{lem:supportpuncture}
    Let $\sigma \in \mbS_n$, $i  \in [n]$, and $S_i := [n] \setminus \{i\}$. We have 
$$|I_{\sigma|_{S_i}}| = |I_{\sigma}| - |I_{\sigma, \sigma(i)}| = |I_{\sigma, \neq \sigma(i)}|.$$
\end{lemma}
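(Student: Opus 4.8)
The plan is to handle the two equalities separately: the second is essentially formal, while the first calls for an explicit bijection built from the relabeling maps $\psi_i$ and $\psi_{\sigma(i)}$. For the second equality, I would first note that $I_{\sigma,\sigma(i)} = I_{\sigma,<\sigma(i)} \cup I_{\sigma,>\sigma(i)}$ is precisely the set of inversions of $\sigma$ having the value $\sigma(i)$ as one of their two coordinates (as the larger coordinate in $I_{\sigma,<\sigma(i)}$, as the smaller one in $I_{\sigma,>\sigma(i)}$), and that $I_{\sigma,\neq\sigma(i)}$ is by definition its complement $I_\sigma \setminus I_{\sigma,\sigma(i)}$. Since $I_{\sigma,\sigma(i)}$ and $I_{\sigma,\neq\sigma(i)}$ are disjoint and partition $I_\sigma$, counting gives $|I_\sigma| = |I_{\sigma,\sigma(i)}| + |I_{\sigma,\neq\sigma(i)}|$, which is exactly $|I_\sigma| - |I_{\sigma,\sigma(i)}| = |I_{\sigma,\neq\sigma(i)}|$.

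For the first equality, I would exhibit a bijection $\Phi \colon I_{\sigma,\neq\sigma(i)} \to I_{\sigma|_{S_i}}$. Deleting the position $i$ from $\sigma$ relabels the surviving positions by the order isomorphism $\psi_i \colon [n]\setminus\{i\} \to [n-1]$ and the surviving values by $\psi_{\sigma(i)} \colon [n]\setminus\{\sigma(i)\} \to [n-1]$. The natural candidate is $\Phi(a,b) := (\psi_{\sigma(i)}(a), \psi_{\sigma(i)}(b))$. The heart of the argument is the intertwining identity
$$(\sigma|_{S_i})^{-1}\big(\psi_{\sigma(i)}(a)\big) = \psi_i\big(\sigma^{-1}(a)\big),$$
valid for every $a \in [n]\setminus\{\sigma(i)\}$ (note $\sigma^{-1}(a) \neq i$, so the right-hand side is defined). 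This records the defining property of puncturing: the value $a$ sits at position $\sigma^{-1}(a)$ in $\sigma$, and after deleting position $i$ its position becomes $\psi_i(\sigma^{-1}(a))$ while its value becomes $\psi_{\sigma(i)}(a)$, the relative order of the surviving entries being preserved.

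Granting this identity, the verification is routine because $\psi_i$ and $\psi_{\sigma(i)}$ are strictly increasing: one has $a > b$ if and only if $\psi_{\sigma(i)}(a) > \psi_{\sigma(i)}(b)$, and $\sigma^{-1}(a) < \sigma^{-1}(b)$ if and only if $\psi_i(\sigma^{-1}(a)) < \psi_i(\sigma^{-1}(b))$, i.e.\ $(\sigma|_{S_i})^{-1}(\psi_{\sigma(i)}(a)) < (\sigma|_{S_i})^{-1}(\psi_{\sigma(i)}(b))$. Hence $(a,b) \in I_{\sigma,\neq\sigma(i)}$ if and only if $\Phi(a,b) \in I_{\sigma|_{S_i}}$, so $\Phi$ is well defined with image in $I_{\sigma|_{S_i}}$; injectivity is immediate from the injectivity of $\psi_{\sigma(i)}$, and surjectivity follows by running the same equivalence backward through $\psi_{\sigma(i)}^{-1}$. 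This yields $|I_{\sigma,\neq\sigma(i)}| = |I_{\sigma|_{S_i}}|$ and completes the proof. The main obstacle I anticipate is purely bookkeeping: carrying two distinct relabelings in parallel — one on positions indexed by $i$, one on values indexed by $\sigma(i)$ — and nailing down the intertwining identity without direction errors; once it is established, order-preservation makes everything else mechanical.
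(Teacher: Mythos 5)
Your proof is correct and follows essentially the same route as the paper's: the paper also establishes the bijection $(s,\ell)\mapsto(\psi_{\sigma(i)}(s),\psi_{\sigma(i)}(\ell))$ between $I_{\sigma,\neq\sigma(i)}$ and $I_{\sigma|_{S_i}}$ via a chain of equivalences, and obtains the second equality directly from the definitions. You are merely more explicit about the intertwining identity $(\sigma|_{S_i})^{-1}(\psi_{\sigma(i)}(a))=\psi_i(\sigma^{-1}(a))$, which the paper uses implicitly.
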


\begin{proof}
    Let $\sigma' := \sigma|_{S_i}$ and $j:= \sigma(i)$. By definition,  $(s, \ell) \in I_{\sigma, \neq j}$  if and only if $s \neq j \neq \ell$, $s > \ell$ and $\sigma^{-1}(s) < \sigma^{-1}(\ell)$ if and only if $\psi_j(s) > \psi_j(\ell)$ and $(\sigma')^{-1}(\psi_j(s)) < (\sigma')^{-1}(\psi_j(\ell))$ if and only if $(\psi_j(s), \psi_j(\ell)) \in I_{\sigma'}$. Because the map $\psi_j$ is bijective it follows that 
    $|I_{\sigma} \backslash I_{\sigma, j}| = |I_{\sigma, \neq j}| = |I_{\sigma'}|$. The second equality of the statement follows by definition.    
\end{proof}

The next two results describe how the weight of a permutation and the distance between permutations (respectively) behave under some puncturing operations.

\begin{proposition}\label{prop:weightpunct}
    Let $S_i = [n] \setminus \{i\}$ for $i \in \{1,n\}$ and let $\sigma \in \mbS_n$ such that $\sigma(i) = j$. We have 
$$\wk(\sigma|_{S_i}) = \begin{cases} \wk(\sigma) - (j-1) & \textrm{if } i =1, \\
                                      \wk(\sigma) - (n-j)& \textrm{if } i=n.     
\end{cases}$$
\end{proposition}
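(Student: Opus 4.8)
The plan is to derive the result as a direct corollary of Lemma~\ref{lem:supportpuncture}, which already tells us that $\wk(\sigma|_{S_i}) = |I_{\sigma|_{S_i}}| = |I_\sigma| - |I_{\sigma,\sigma(i)}| = \wk(\sigma) - |I_{\sigma,j}|$ when $\sigma(i)=j$. Since the lemma covers any singleton complement, the only remaining task is to compute the correction term $|I_{\sigma,j}|$ explicitly in the two special cases $i=1$ and $i=n$, where the position of the removed element at one of the two extremes makes the count particularly clean.

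First I would recall that, by definition, $I_{\sigma,j} = I_{\sigma,<j} \cup I_{\sigma,>j}$ collects all inversions of $\sigma$ that involve the value $j$, where $I_{\sigma,<j}$ records pairs $(j,\ell)$ with $j>\ell$ and $\sigma^{-1}(j)<\sigma^{-1}(\ell)$, and $I_{\sigma,>j}$ records pairs $(\ell,j)$ with $\ell>j$ and $\sigma^{-1}(\ell)<\sigma^{-1}(j)$. These two sets are disjoint, so $|I_{\sigma,j}| = |I_{\sigma,<j}| + |I_{\sigma,>j}|$. The key observation is that the hypothesis $\sigma(i)=j$ with $i\in\{1,n\}$ fixes the position $\sigma^{-1}(j)=i$ of the value $j$ to be either the very first or the very last coordinate, which forces one of the two sets to contain \emph{all} possible pairs and the other to be empty.

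For $i=1$ we have $\sigma^{-1}(j)=1$, so $\sigma^{-1}(j)<\sigma^{-1}(\ell)$ holds for every $\ell\neq j$; hence every pair $(j,\ell)$ with $\ell<j$ is an inversion, giving $|I_{\sigma,<j}| = |\{\ell : \ell<j\}| = j-1$, while no pair of the form $(\ell,j)$ with $\ell>j$ can be an inversion (we would need $\sigma^{-1}(\ell)<\sigma^{-1}(j)=1$, impossible), so $|I_{\sigma,>j}|=0$. This yields $|I_{\sigma,j}|=j-1$ and therefore $\wk(\sigma|_{S_1}) = \wk(\sigma)-(j-1)$. Symmetrically, for $i=n$ we have $\sigma^{-1}(j)=n$, which is the maximal possible position, so now $I_{\sigma,<j}$ is empty while $I_{\sigma,>j}$ contains every $(\ell,j)$ with $\ell>j$, of which there are $n-j$; this gives $|I_{\sigma,j}|=n-j$ and $\wk(\sigma|_{S_n}) = \wk(\sigma)-(n-j)$.

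I expect essentially no real obstacle here: the entire content is packaged in Lemma~\ref{lem:supportpuncture}, and the proof reduces to an extremal counting argument that exploits the fact that a value placed in the first (resp.\ last) position is smaller (resp.\ larger) in position than every other value. The only point requiring care is bookkeeping the two definitions $I_{\sigma,<j}$ and $I_{\sigma,>j}$ correctly and verifying that exactly one of them vanishes in each case; once that is done, the counts $j-1$ and $n-j$ follow immediately.
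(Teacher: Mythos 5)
Your proposal is correct and follows essentially the same route as the paper: apply Lemma~\ref{lem:supportpuncture} to reduce to computing $|I_{\sigma,j}|$, then observe that placing the value $j$ in the first (resp.\ last) position forces $I_{\sigma,j}$ to consist of exactly the $j-1$ pairs $(j,\ell)$ with $\ell<j$ (resp.\ the $n-j$ pairs $(\ell,j)$ with $\ell>j$). Your write-up is, if anything, slightly more explicit than the paper's in splitting $I_{\sigma,j}$ into $I_{\sigma,<j}$ and $I_{\sigma,>j}$ and checking that one of the two is empty.
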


\begin{proof}
 By \cref{lem:supportpuncture} we have $\smash{|I_{\sigma|_{S_i}}| = |I_{\sigma}| - |I_{\sigma, j}|}$ for $i \in \{1, n\}$.
 If  $i =1$, since $\sigma^{-1}(j) = 1$ we have $\smash{I_{\sigma,j} = \{ (j, i) \, : 1 \leq i < j\}}$. Thus $\smash{|I_{\sigma,j}| = (j-1)}$ and $\smash{\wk(\sigma|_{S_1}) = |I_{\sigma|_{S_1}}| = \wk(\sigma) - (j-1)}$.
 
If $i = n$, since $\sigma^{-1}(j) = n$ we have $I_{\sigma,j} = \{ (i, j) \, : j <   i \leq n\}$. Therefore $|I_{\sigma,j}| = n- i$  and $\wk(\sigma|_{S_n}) = \wk(\sigma) - (n-j).$
\end{proof}

\begin{proposition}\label{prop:distpuncS1}
    Let  $\sigma, \tau \in \mbS_n$ and $i \in \{1, n\}$. If $\sigma(i) = \tau(i)$, then 
    $$\dk(\sigma|_{S_i}, \tau|_{S_i}) =  \dk(\sigma , \tau). $$
\end{proposition}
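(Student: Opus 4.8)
The plan is to reduce both sides of the claimed identity to symmetric differences of inversion sets via \cref{prop:distsupp}, and then exploit the bijection appearing in the proof of \cref{lem:supportpuncture} to transport the punctured inversion sets back to the ambient $\mbS_n$. Write $j := \sigma(i) = \tau(i)$, $\sigma' := \sigma|_{S_i}$ and $\tau' := \tau|_{S_i}$. By \cref{prop:distsupp} we have $\dk(\sigma,\tau) = |I_\sigma \triangle I_\tau|$ and $\dk(\sigma',\tau') = |I_{\sigma'} \triangle I_{\tau'}|$, where $\triangle$ denotes the symmetric difference. Thus the statement is equivalent to the equality $|I_{\sigma'} \triangle I_{\tau'}| = |I_\sigma \triangle I_\tau|$.

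First I would recall from the proof of \cref{lem:supportpuncture} that the map $(s,\ell) \mapsto (\psi_j(s), \psi_j(\ell))$ is a bijection from $I_{\sigma, \neq j}$ onto $I_{\sigma'}$. The key point is that $\psi_j$ depends only on the common value $j$ and not on the permutation, so the very same map is also a bijection from $I_{\tau, \neq j}$ onto $I_{\tau'}$. Since a bijection commutes with symmetric differences and preserves cardinality, this would give
$$|I_{\sigma'} \triangle I_{\tau'}| = |I_{\sigma, \neq j} \triangle I_{\tau, \neq j}|.$$
It therefore remains to show $|I_{\sigma, \neq j} \triangle I_{\tau, \neq j}| = |I_\sigma \triangle I_\tau|$.

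The crucial and only step using the hypothesis $i \in \{1,n\}$ is the claim that the inversions involving $j$ coincide, that is, $I_{\sigma, j} = I_{\tau, j}$. Indeed, exactly as in the proof of \cref{prop:weightpunct}: if $i = 1$ then $\sigma^{-1}(j) = \tau^{-1}(j) = 1$, so both sets equal $\{(j,\ell) \, : \, 1 \le \ell < j\}$; if $i = n$ then $\sigma^{-1}(j) = \tau^{-1}(j) = n$, so both equal $\{(\ell, j) \, : \, j < \ell \le n\}$. In either case the set of inversions involving $j$ is determined by $j$ alone once $j$ sits in the first or last position, independently of where the remaining values are placed. Because the pairs involving $j$ and the pairs avoiding $j$ live in disjoint parts of $[n]\times[n]$, the symmetric difference splits as
$$I_\sigma \triangle I_\tau = (I_{\sigma, \neq j} \triangle I_{\tau, \neq j}) \sqcup (I_{\sigma, j} \triangle I_{\tau, j}),$$
and the second term is empty by the claim. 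Taking cardinalities and chaining the equalities above yields $\dk(\sigma,\tau) = \dk(\sigma',\tau')$.

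I expect the main obstacle to be pinning down precisely why $i \in \{1,n\}$ is needed: for an interior position $i$, the inversions of $\sigma$ involving $j$ depend on which smaller values lie to the right of $j$ and which larger values lie to the left, and these need not match those of $\tau$ even when $\sigma(i) = \tau(i)$. The restriction to the endpoints is exactly what forces the $j$-part of the two inversion sets to agree, so I would make sure this is clearly isolated; everything else is a clean transport of cardinalities through the bijection induced by $\psi_j$.
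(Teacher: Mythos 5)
Your proposal is correct and follows essentially the same route as the paper's proof: both reduce the distance to inversion sets via \cref{prop:distsupp}, transport the inversions not involving $j$ through the bijection induced by $\psi_j$, and conclude from the observation that $I_{\sigma,j}=I_{\tau,j}$ when $j$ occupies the first or last position. Your phrasing in terms of symmetric differences is a slightly cleaner packaging of the paper's inclusion--exclusion computation, and you handle the case $i=n$ explicitly where the paper omits it as similar, but the substance is identical.
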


\begin{proof}
We start with $i = 1$.
    Let $j = \sigma(1) = \tau(1)$,  $\sigma' = \sigma|_{S_i}$, and $\tau' =  \tau|_{S_i}$. By \cref{prop:distsupp},
    \begin{align*}
        \dk(\sigma, \tau) &= |I_{\sigma} \cup I_{\tau}| - |I_{\sigma} \cap I_{\tau}|\\
        &= |I_{\sigma, \neq j} \cup I_{\tau, \neq j}| - |I_{\sigma, \neq j} \cap I_{\tau, \neq j}| + |I_{\sigma,  j} \cup I_{\tau, j}| - |I_{\sigma, j} \cup I_{\tau, j}|\\
        &=  |I_{\sigma, \neq j}| +  |I_{\tau, \neq j}| - 2|I_{\sigma, \neq j} \cap I_{\tau, \neq j}| + |I_{\sigma,  j} \cup I_{\tau, j}| - |I_{\sigma, j} \cup I_{\tau, j}|.
    \end{align*}
    Note that  $(s, \ell) \in I_{\sigma, \neq j} \cap I_{\tau, \neq j}$ if and only if $(\psi_j(s), \psi_j(\ell)) \in I_{\sigma'} \cap I_{\tau'}$. In addition, since $\psi_j$ is bijective, $\smash{|I_{\sigma, \neq j} \cap I_{\tau, \neq j}| = |I_{\sigma'} \cap I_{\tau'}|}$. 
    Therefore, together with \cref{lem:supportpuncture}, we get
    \begin{align*}
        \dk(\sigma, \tau) &=  |I_{\sigma'}| +|I_{\tau'}| - 2|I_{\sigma'} \cap I_{\tau'}| + |I_{\sigma, j } \cup I_{\tau, j}| - |I_{\sigma, j} \cap I_{\tau, j}|\\
        &= |I_{\sigma'} \cup I_{\tau'}| - |I_{\sigma'} \cap I_{\tau'}| + |I_{\sigma, j } \cup I_{\tau, j}| - |I_{\sigma, j} \cap I_{\tau, j}|\\
        &= \dk(\sigma' , \tau') + |I_{\sigma, j } \cup I_{\tau, j}| - |I_{\sigma, j} \cap I_{\tau, j}|.
    \end{align*} 
Finally, since $\sigma(1) = j$ we have $(j, \ell) \in I_{\sigma, j}$ if and only if $\ell < j$. Because the same holds for $I_{\tau, j}$, we have $I_{\sigma, j} = I_{\tau, j}$ and therefore $\dk(\sigma, \tau) =  \dk(\sigma' , \tau')$.
The proof for the case $i = n$ is similar and therefore omitted. 
\end{proof}

We are ready to present our averaging bound for codes with the Kendall-$\tau$ metric, which is the main result of this section. Its proof relies on \cref{prop:distpuncS1}. In the next section we will
classify the codes whose parameters meet the bound with equality. 

\begin{notation}\label{nott}
    In the sequel, for $\mC \subseteq \mbS_n$ we let $\mC^{i,j} := \{\sigma \in \mC \, : \, \sigma(i) = j\}$, where $1 \leq i, j \leq n$.
\end{notation}

\begin{theorem}\label{thm:Singletonbound2}
    Let $\mC \subseteq \mbS_n$ and suppose that $\dk(\mC) > \binom{t}{2} $ for some $t \in [n]$. Then $|\mC| \leq \frac{n!}{t!}$.
\end{theorem}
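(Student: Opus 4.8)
The plan is to induct on $n$, holding $t$ fixed, and at each step to puncture the first coordinate while averaging over the possible values of $\sigma(1)$. First I would dispose of the extreme case $t = n$: here $\binom{t}{2} = \binom{n}{2}$ is the diameter of $(\mbS_n, \dk)$, since \cref{prop:distsupp} gives $\dk(\sigma, \tau) = |(I_\sigma \cup I_\tau) \setminus (I_\sigma \cap I_\tau)| \leq |I_\sigma \cup I_\tau| \leq \binom{n}{2}$ for all $\sigma, \tau$ (every inversion is a pair $(i,j)$ with $i > j$, and there are $\binom{n}{2}$ of these). Thus $\dk(\mC) > \binom{n}{2}$ is impossible and the claim is vacuous; this also settles $n = 1$. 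So I may assume $n \geq 2$ and $t \leq n - 1$, with the statement already known in $\mbS_{n-1}$ for this $t$ (note $1 \le t \le n-1$, so $t \in [n-1]$).

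For the inductive step I would write $\mC$ as the disjoint union $\bigsqcup_{j=1}^n \mC^{1,j}$ over the value of the first entry and analyze each class $\mC^{1,j}$ separately. The first observation is that $S_1$-puncturing is injective on $\mC^{1,j}$: every element has $\sigma(1) = j$, so its remaining values are exactly $[n] \setminus \{j\}$ and are determined by their relative order, i.e.\ by $\sigma|_{S_1}$; hence $|\mC^{1,j}| = |\mC^{1,j}|_{S_1}|$. The second observation is that distance is preserved: any $\sigma, \tau \in \mC^{1,j}$ agree in position $1$, so \cref{prop:distpuncS1} gives $\dk(\sigma|_{S_1}, \tau|_{S_1}) = \dk(\sigma, \tau) \ge \dk(\mC) > \binom{t}{2}$. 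Consequently, whenever $|\mC^{1,j}| \geq 2$, the code $\mC^{1,j}|_{S_1} \subseteq \mbS_{n-1}$ still has minimum distance exceeding $\binom{t}{2}$, so the induction hypothesis yields $|\mC^{1,j}|_{S_1}| \leq (n-1)!/t!$; and when $|\mC^{1,j}| \leq 1$ this bound holds trivially because $(n-1)!/t! \geq 1$. Summing the identity $|\mC^{1,j}| = |\mC^{1,j}|_{S_1}|$ over the $n$ values of $j$ then gives
$$|\mC| = \sum_{j=1}^n |\mC^{1,j}| \leq n \cdot \frac{(n-1)!}{t!} = \frac{n!}{t!}.$$

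The conceptual heart of the argument — and the one place where something genuinely has to be proved rather than bookkept — is the exact preservation of distances under puncturing a coordinate on which all codewords coincide, which is precisely \cref{prop:distpuncS1}; this is what allows the minimum-distance hypothesis to descend intact through the induction. The remaining work is routine: verifying injectivity of puncturing on each class so that cardinalities are unchanged, and handling the degenerate classes with at most one codeword. I do not expect any of these to pose real difficulty.
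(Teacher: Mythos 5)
Your proof is correct and is essentially the paper's argument: both decompose $\mC$ by the value of $\sigma(1)$, use \cref{prop:distpuncS1} to push the minimum-distance hypothesis through $S_1$-puncturing, and ultimately rely on $\binom{t}{2}$ being the diameter of $\mbS_t$. The only difference is organizational — you sum the bound $(n-1)!/t!$ over all $n$ fibers and recurse by induction on $n$, whereas the paper follows a single chain of largest fibers via an averaging (pigeonhole) argument down to $\mbS_t$; the two bookkeeping schemes yield the identical estimate $|\mC|\leq n!/t!$.
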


\begin{proof}
For notation purposes, let $\mC^{i} := \mC^{1,i}$ and $S_i = [n] \setminus \{i\}$. 
Using an averaging argument, one can easily see that $|\mC^i| \geq |\mC|/n$ for some $1 \leq i \leq n$. For that $i$, we have $|\mC^i|_{S_1}| = |\mC^i|$ and  $\dk(\mC^i|_{S_1}) \geq \dk(\mC)$, where the latter follows from \cref{prop:distpuncS1}.

Let $\mC_1 = \mC^i|_{S_1} \subseteq \mbS_{n-1}$. 
Using an averaging argument as before, we see that there exists $1 \leq j \leq n-1$ with $|\mC_1^j| \geq |\mC_1|/(n-1)$.
Consider the code $\mC_2 := \mC_1^j|_{S_1} \subseteq \mbS_{n-2}$. We have $|\mC_2|= |\mC_1^j|$ and  $\dk(\mC_2) \geq \dk(\mC_1^j)$, again by \cref{prop:distpuncS1}.
Iterating this process $n-t$ times, we arrive to a code $\mC_{n-t} \subseteq \mbS_{t}$ with $|\mC_{n-t}| \geq \frac{|\mC|}{n(n-1)\cdots(t+1)}$ and $\dk(\mC_{n-t}) \leq \dk(\mC_{n-t+1})$.

Using the assumption on the minimum distance of $\mC$, we furthermore have that $|\mC_{n-t}| \leq 1$. Indeed, if $|\mC_{n-t}| \geq 2$ then $\dk(\mC_{n-t}) \leq \binom{t}{2}$. However, $\dk(\mC) \leq \dk(\mC_1) \leq \cdots \leq \dk(\mC_{n-t}) \leq \binom{t}{2}$, which leads to a contradiction on the minimum distance of $\mC$. 
 Hence we get that 
    $$\frac{|\mC|}{n(n-1) \ldots (t+1)} \leq |\mC_{n-t}| \leq 1,$$
concluding the proof.
\end{proof}

The above bound naturally gives rise to the following concept.

\begin{definition}\label{def:TMDS}
Let $1 \leq t \leq n$ be an integer. We say that a code $\mC \subseteq \mbS_n$ is \emph{$t$-balanced} if $\dk(\mC) > \binom{t}{2}$ and $|\mC| = \frac{n!}{t!}$.
\end{definition}


Note, a similar bound can be found in~\cite[Thm 14]{jiangcorrecting10}; see \eqref{eq:singl2}. However, in~\cite{jiangcorrecting10} the authors fix the cardinality of the code and derive a bound on the minimum distance, whereas we fix the minimum distance and derive a bound on the code's cardinality. Although our bound could also be considered a Singleton-type bound, we chose to name it averaging bound to distinguish it from the bounds in~\cite[Thm 14]{jiangcorrecting10}.
Furthermore, in \cite{jiangcorrecting10} the authors define \emph{MDS codes} as being the codes whose minimum distance achieves their bound with equality. As we show in the next result, our notion of $t$-balanced only agrees with the notion of MDS code from \cite{jiangcorrecting10} in the case where $|\mC| = n!/t!$ and $\smash{\dk(\mC) = \binom{t}{2} +1}$.

\begin{corollary}\label{cor:optmindist}
    If $\mC \subseteq \mbS_n$ is a $t$-balanced code, then $\dk(\mC) = \binom{t}{2} + 1$. In particular $t$-balanced codes are MDS codes. 
\end{corollary}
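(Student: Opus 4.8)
The lower bound is free: by \cref{def:TMDS} a $t$-balanced code satisfies $\dk(\mC) > \binom{t}{2}$, and since Kendall-$\tau$ distances are integers this already gives $\dk(\mC) \ge \binom{t}{2}+1$. So the entire content of the statement is the reverse inequality $\dk(\mC) \le \binom{t}{2}+1$; once it is established, equality is immediate, and the MDS assertion follows directly from the definition of MDS in \cite{jiangcorrecting10} (the minimum distance meeting their bound \eqref{eq:singl2} with equality). The quickest route to the upper bound is to feed the equality $|\mC| = n!/t!$ into the Singleton-type bound \eqref{eq:singl2}, which returns exactly $\dk(\mC) \le \binom{t}{2}+1$.

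The one point that needs care is that \eqref{eq:singl2} is stated only for $1 \le t \le n-2$, whereas \cref{def:TMDS} allows $t$ up to $n$. The value $t=n$ is vacuous, since it forces $|\mC| = 1 < 2$; but $t=n-1$ (giving $|\mC|=n$) is a genuine case not covered by the cited bound. Rather than patch this single case, I would give one self-contained argument for the upper bound valid for all $t \ge 2$ (the case $t=1$ is trivial, as then $|\mC|=n!$ forces $\mC=\mbS_n$ and $\dk(\mC)=1=\binom{1}{2}+1$), built on the puncturing machinery already developed for \cref{thm:Singletonbound2}.

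Concretely, I would first observe that for a $t$-balanced code the averaging bound is tight: running the puncturing process of \cref{thm:Singletonbound2} forces $|\mC_{n-t}|=1$, and hence every averaging inequality along the way to be an equality. Stopping one step earlier then yields a genuine code $\mathcal{D} \subseteq \mbS_{t+1}$ with exactly $t+1$ codewords and $\dk(\mathcal{D}) \ge \dk(\mC)$, the minimum distance being nondecreasing along the punctures by \cref{prop:distpuncS1}. Suppose, for contradiction, that $\dk(\mC) \ge \binom{t}{2}+2$. Writing $m=t+1$ and using the antipodal identity $\dk(\sigma,\tau)+\dk(\sigma,\bar\tau)=\binom{m}{2}$ in $\mbS_m$, where $\bar\tau(i)=\tau(m+1-i)$ reverses the one-line notation, every codeword other than a fixed $\sigma_1$ lies in the Kendall ball of radius $m-3$ around $\bar\sigma_1$. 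There are $m-1 \ge 2$ such codewords, so two of them are within distance $2(m-3)$ of each other by the triangle inequality, contradicting $\dk(\mathcal{D}) \ge \binom{m}{2}-m+3$ because $\binom{m}{2}-m+3 > 2(m-3)$ for every $m$ (the quadratic $m^2-7m+18$ has negative discriminant). This forces $\dk(\mC) \le \binom{t}{2}+1$, as required.

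The main obstacle is not the final numerical inequality but the reduction that precedes it: one must read off the equality cases in the proof of \cref{thm:Singletonbound2} carefully enough to conclude that $|\mathcal{D}|=t+1$ exactly (not merely $\ge t+1$) and that the minimum distance transports correctly through the punctures. The antipodal identity is standard but, since it is not recorded in the text, I would include a one-line justification, namely that reversing the one-line notation of $\tau$ turns every concordant pair into a discordant one; everything else is bookkeeping.
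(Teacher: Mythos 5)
Your proposal is correct, and its opening paragraph is in fact exactly the paper's proof: the authors obtain $\dk(\mC)\le\binom{t}{2}+1$ by substituting $|\mC|=n!/t!$ into \eqref{eq:singl2}, and combine this with the lower bound $\dk(\mC)\ge\binom{t}{2}+1$ that is immediate from \cref{def:TMDS}. The remainder of your write-up is a genuinely different, self-contained argument, and the motivation for it is a fair observation: \eqref{eq:singl2} is quoted only for $1\le t\le n-2$, so the case $t=n-1$ (codes of size $n$) falls formally outside the cited range, and it cannot be dismissed as irrelevant because \cref{cor:optmindist} is invoked in the proof of \cref{thm:BOMB1} before the non-existence of $t$-balanced codes for $t\ge 3$ is established. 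Your replacement --- puncture down to a code $\mathcal{D}\subseteq\mbS_{t+1}$ of size at least $t+1\ge 3$ with no smaller minimum distance, then combine the reversal identity $\dk(\sigma,\tau)+\dk(\sigma,\bar\tau)=\binom{t+1}{2}$ with the triangle inequality --- checks out: the inequality $\binom{m}{2}-m+3>2(m-3)$ does hold for every $m$, and the reduction needs less than you fear, since only $|\mathcal{D}|\ge 3$ is required, which follows from the chain of averaging inequalities alone without tracking equality cases (exactness $|\mathcal{D}|=t+1$ then comes for free from \cref{thm:Singletonbound2} applied in $\mbS_{t+1}$, but is never used). In short, the paper's route buys brevity at the cost of importing an external bound; yours buys independence from \cite{jiangcorrecting10} and coverage of the boundary case $t=n-1$, at the cost of introducing the antipodal identity, which is standard and correctly justified by your remark that reversing the one-line notation exchanges concordant and discordant pairs.
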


\begin{proof}
Let $\mC \subseteq \mbS_n$ be a $t$-balanced code. Since $|\mC| = \frac{n!}{t!}$, by \eqref{eq:singl2} we have $\dk(\mC) \leq \binom{t}{2} +1$. However, we also know, by assumption, that $\dk(\mC) \geq \binom{t}{2} +1$. Thus equality must hold true. The second statement follows from the definition of MDS codes established in \cite{jiangcorrecting10}.
\end{proof}

\begin{remark}
    \cref{thm:Singletonbound2} can also be derived from the code-anticode bound established in \cite[Cor.~1]{sarit}. In fact, the set $\smash{\mcA_{n-t} = \{ \sigma \in \mbS_n \, : \, \sigma(i) = i \textrm{ for all } t+1 \leq i \leq n\}}$ is an anticode of cardinality~$t!$ and diameter $\smash{\binom{t}{2}}$. We call such an anticode a \emph{cube} of \emph{length} $t$. 
    It is unclear whether the cardinality of a cube of length $t$ is ever greater than a ball of radius $\binom{t}{2}/2$. If the latter is ever the case, then \cref{thm:Singletonbound2} would be tighter than \eqref{eq:spherepacking} for some choices of parameters.
    However, at the time of writing this paper, no 
    explicit or easy-to-evaluate formula 
    exists to determine the cardinality of a  Kendall-$\tau$ ball of given radius. For small enough values of $t$, computations done using a computer system seem to suggest that the ball of radius $\binom{t}{2}/2$ is larger than cubes of length $t$ for $t>2$, but we were unable to prove this fact. We leave this as an open problem.
\end{remark}

\cref{def:TMDS} suggests the following question: Do  $t$-balanced codes exist for $t \geq 2$?
The even subgroup $A_n \leq \mbS_n$, consisting of all permutations of even weight, has cardinality $n!/2$ and minimum distance $2$. Therefore, it is a $2$-balanced code. The next natural questions would be: 1) Can we classify $2$-balanced codes? \ 2) Do there exist $t$-balanced codes for $t \geq 3$? The remaining of this paper will answer both questions.

\section{Existence and Classification}
\label{sec:exist}

In this section we answer two previously asked questions: 1) Can we classify $2$-balanced codes? \ 2) Do there exist $t$-balanced codes for $t \geq 3$?
We start by determining some combinatorial properties $t$-balanced codes must satisfy. Recall~\cref{nott}.

\begin{lemma}\label{lem:optpunct}
    Let $\mC \subseteq \mbS_n$ be a $t$-balanced code and let $S_j = [n] \setminus \{ j\}$ for $j \in [n]$. For all $1 \leq i \leq n$ we have 
    $$|\mC^{1,i}|_{S_1}| =  \frac{|\mC|}{n} =  |\mC^{n,i}|_{S_n}|.$$
In particular, if $\mC$ is $t$-balanced then $\mC^{i,j}|_{S_i} \subseteq \mbS_{n-1}$ is also $t$-balanced for all $i \in \{1, n\}$ and $j \in [n]$.
\end{lemma}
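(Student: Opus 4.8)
The plan is to apply the averaging bound of \cref{thm:Singletonbound2}, but now \emph{in the ambient group $\mbS_{n-1}$}, to each individual punctured class $\mC^{1,i}|_{S_1}$, and then exploit the tightness of the hypothesis $|\mC| = n!/t!$ to upgrade the resulting inequalities into equalities. The crucial structural observation is that puncturing at a fixed first entry preserves both cardinality and minimum distance, so each such class is itself a legitimate candidate for the averaging bound one dimension down.

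First I would fix $i \in [n]$ and set $\mC_1 := \mC^{1,i}|_{S_1} \subseteq \mbS_{n-1}$. Since every $\sigma \in \mC^{1,i}$ satisfies $\sigma(1) = i$, \cref{prop:distpuncS1} (with first entry) gives $\dk(\sigma|_{S_1}, \tau|_{S_1}) = \dk(\sigma, \tau)$ for all $\sigma, \tau \in \mC^{1,i}$. In particular, distinct codewords of $\mC^{1,i}$ stay distinct after puncturing, so $|\mC_1| = |\mC^{1,i}|$, and moreover $\dk(\mC_1) \geq \dk(\mC) > \binom{t}{2}$. Here I use that $t \leq n-1$, which is forced because $\binom{t}{2} < \dk(\mC) \leq \binom{n}{2}$. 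Applying \cref{thm:Singletonbound2} to $\mC_1 \subseteq \mbS_{n-1}$ (and noting the bound holds trivially if a class has at most one element) yields
$$|\mC^{1,i}| = |\mC_1| \leq \frac{(n-1)!}{t!}.$$

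Next I would sum this inequality over all $i \in [n]$. Since the classes $\mC^{1,i}$ partition $\mC$ according to the value $\sigma(1)$, we obtain
$$\frac{n!}{t!} = |\mC| = \sum_{i=1}^n |\mC^{1,i}| \leq n \cdot \frac{(n-1)!}{t!} = \frac{n!}{t!}.$$
Hence equality holds throughout, forcing $|\mC^{1,i}| = (n-1)!/t! = |\mC|/n$ for every $i$, which is the first claimed identity. The case of $\mC^{n,i}|_{S_n}$ is entirely symmetric, using the last-entry instance of \cref{prop:distpuncS1}, and gives the second identity. For the ``in particular'' statement, each $\mC_1 = \mC^{1,i}|_{S_1}$ now satisfies $|\mC_1| = (n-1)!/t!$ together with $\dk(\mC_1) > \binom{t}{2}$, which is exactly \cref{def:TMDS} for $\mbS_{n-1}$; thus $\mC_1$ is $t$-balanced, and likewise for $i = n$.

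There is no serious analytic obstacle here; the care needed is purely in the bookkeeping of the ambient symmetric group, namely that the averaging bound must be invoked in $\mbS_{n-1}$ rather than $\mbS_n$, and that puncturing is injective on each fixed-first-entry class (which is precisely where the strict positivity of the minimum distance, via \cref{prop:distpuncS1}, enters). The genuinely new idea relative to the proof of \cref{thm:Singletonbound2} is to apply the bound to \emph{every} class simultaneously and sum, rather than selecting a single maximal class; the equality hypothesis $|\mC| = n!/t!$ is exactly what collapses the averaging inequality into an exact count.
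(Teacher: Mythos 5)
Your proof is correct and follows essentially the same route as the paper: both invoke \cref{prop:distpuncS1} to see that puncturing a fixed-first-entry class preserves cardinality and minimum distance, apply \cref{thm:Singletonbound2} in $\mbS_{n-1}$ to bound each class by $(n-1)!/t!$, and use the partition $\sum_i |\mC^{1,i}| = |\mC| = n!/t!$ to force equality. The only (cosmetic) difference is that the paper argues by contradiction on a class of maximal size, whereas you apply the bound to every class and sum directly, which is a slight streamlining; your explicit remarks that $t \le n-1$ and that classes of size at most one satisfy the bound trivially are careful touches the paper leaves implicit.
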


\begin{proof}
    We show the statement for $\mC^{1,i}|_{S_1}$. Using the symmetry of permutations, one can easily derive a similar argument to show the desired result for $\mC^{n,i}|_{S_n}$.  To simplify the notation, let $\mC^i := \mC^{1,i}$.

Let $i \in [n]$ such that $|\mC^i| \geq |\mC| / n$. Assume towards a contradiction that $\smash{|\mC^i| > |\mC| / n}$. Let $\smash{\mC_1 := \mC^i|_{S_1} \subseteq \mbS_{n-1}}$. Since $\smash{\dk(\mC_1) \geq \dk(\mC) \geq \binom{t}{2}}$, by  \cref{thm:Singletonbound2} we must have $\smash{|\mC_1| \leq \frac{(n-1)!}{t!}}$. However, this implies $\smash{|\mC| < n|\mC_1| \leq \frac{n!}{t!}}$, contradicting the fact that $\mC$ is $t$-balanced. Therefore  $|\mC^i| = |\mC| / n$ and $|\mC^j| \leq |\mC|/n$ for all $1 \leq j \leq n$. 
Furthermore, because $|\mC| = \sum_{i=1}^n |\mC^i| \leq \sum_{i=1}^n |\mC|/n$,  we must have that $|\mC^i| = |\mC|/n$ for all $1 \leq i \leq n$.

The fact that, for all $1 \leq i \leq n$, the code $\mC^i \subseteq \mbS_{n-1}$ is $t$-balanced is a direct consequence of $\smash{\dk(\mC^i|_{S_1}) \geq \dk(\mC) > \binom{t}{2}}$ and  $\smash{|\mC^i|_{S_1}| = \frac{|\mC|}{n} = \frac{(n-1)!}{t!}}$.
\end{proof}

As the next result shows, \cref{lem:optpunct} has strong implications for the combinatorial structure of $t$-balanced codes. The following result will be crucial for establishing the non-existence of $t$-balanced codes for $t \geq 3$. 

\begin{theorem}\label{thm:optstructure}
Let $X_{0,t} := \{t+1, \ldots, n\}$,  $X_{n-t,t} := \{1, \ldots, n-t\}$, and $X_{s,t} := \{1, \ldots, s, t+s+1, \ldots,  n\}$ for $1 \leq s \leq n-t-1$.
If $\mC \subseteq \mbS_n$ is a $t$-balanced code, then
for all ordered sequence $\mcI := (i_1, \ldots, i_{n-t})$ with $1 \leq i_j \leq n$ and $i_j \neq i_l$ for $j \neq l$, and $X := X_{s,t}$ where $0 \leq s \leq n-t$, there exists a unique codeword  $\sigma_{\mcI, X} \in \mC$ such that,  for all $j \in X_{s,t}$,
$$\sigma_{\mcI, X}(j) = \begin{cases}i_j  & \textup{if } j \leq s,\\
i_{j-t} &\textup{if } j > s.
    \end{cases} $$
\end{theorem}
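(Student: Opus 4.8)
The plan is to read the theorem as a bijection statement. Fix $s$ with $0 \le s \le n-t$ and write $X := X_{s,t}$. The formula in the statement says precisely that a codeword $\sigma$ takes the value $i_j$ at position $j$ (for $j \le s$) and $i_{j-t}$ at position $j$ (for $j > s$), i.e. it prescribes $\sigma$ on the $n-t$ positions in $X$. Listing the values of $\sigma$ on $X$ in increasing order of position identifies $\sigma|_{\text{positions in }X}$ with an injective assignment of values from $[n]$ to the positions of $X$, and every sequence $\mcI = (i_1,\dots,i_{n-t})$ of distinct values corresponds to exactly one such injective assignment. The number of injective assignments of $[n]$ to $n-t$ positions is $n(n-1)\cdots(t+1) = n!/t!$, which equals $|\mC|$ since $\mC$ is $t$-balanced. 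Hence it suffices to prove that the restriction map $\sigma \mapsto \sigma|_X$ is \emph{injective} on $\mC$: an injection between finite sets of equal cardinality is automatically a bijection, which yields both existence and uniqueness of $\sigma_{\mcI,X}$ for every $\mcI$.

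So the entire content reduces to one claim: two codewords agreeing on all positions in $X$ must coincide. First I would record the shape of $X$. The positions omitted by $X_{s,t}$ form the block $M := \{s+1,\dots,s+t\}$ of $t$ \emph{consecutive} positions, and every position of $X$ lies either entirely before $M$ (in $\{1,\dots,s\}$) or entirely after $M$ (in $\{s+t+1,\dots,n\}$). Now suppose $\sigma,\tau \in \mC$ satisfy $\sigma(p)=\tau(p)$ for all $p \in X$. Then $\sigma$ and $\tau$ use the same set $V$ of values on $X$, hence the same set $W := [n]\setminus V$ of values on $M$, with $|W| = t$; moreover each value of $V$ occupies the same position in $\sigma$ and in $\tau$, while the values of $W$ are merely permuted within $M$.

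Next I would bound $\dk(\sigma,\tau)$ via \cref{prop:distsupp}, which gives $\dk(\sigma,\tau) = |(I_\sigma \cup I_\tau)\setminus(I_\sigma \cap I_\tau)|$, the symmetric difference of the inversion sets. The crux of the argument is to show that every pair contributing to this symmetric difference consists of two values from $W$. A pair of values both in $V$ occupies the same two positions in $\sigma$ and in $\tau$, so it has identical inversion status in both. A mixed pair---one value of $V$ at a fixed position $p$, one value of $W$ at some position $q \in M$---also has identical inversion status in both permutations, because of the sandwich structure of $M$: if $p \le s$ then $p < q$ for \emph{every} $q \in M$, and if $p \ge s+t+1$ then $q < p$ for every $q \in M$, so the relative order of the two values cannot depend on where the $W$-value sits inside $M$. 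Consequently $(I_\sigma \cup I_\tau)\setminus(I_\sigma \cap I_\tau)$ is contained in the set of unordered pairs of elements of $W$, of which there are at most $\binom{t}{2}$. Thus $\dk(\sigma,\tau) \le \binom{t}{2}$, whereas $\dk(\mC) > \binom{t}{2}$ because $\mC$ is $t$-balanced; this forces $\sigma = \tau$, establishing injectivity.

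The main obstacle is exactly this cancellation of the mixed $V$--$W$ inversion pairs, and it is where the consecutiveness of the omitted block $M$---equivalently, the specific shape of the sets $X_{s,t}$---is indispensable; I would verify the two sub-cases ($p$ before or after $M$) explicitly. The boundary cases $s=0$ and $s=n-t$, where $M$ is an initial or terminal block and only one sub-case arises, are covered by the same reasoning. Everything else---identifying the value sets $V$ and $W$, and the final counting step that promotes injectivity to a bijection between $\mC$ and the assignments indexed by $\mcI$---is routine.
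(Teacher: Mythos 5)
Your proof is correct, but it takes a genuinely different route from the paper's. The paper deduces the theorem by iterating \cref{lem:optpunct}: one repeatedly fixes the first (or last) entry, punctures, and uses the fact that the resulting code in $\mbS_{n-1}$ is again $t$-balanced; the sandwich shape of $X_{s,t}$ is handled by alternately peeling positions off the front and the back. You instead give a direct pigeonhole argument: the map $\sigma \mapsto \sigma|_{X}$ (restriction of values to the positions of $X_{s,t}$) is injective on $\mC$ because two codewords agreeing on $X_{s,t}$ differ only by a rearrangement of $t$ values within the consecutive block $\{s+1,\dots,s+t\}$, so by \cref{prop:distsupp} their distance is at most $\binom{t}{2}$, contradicting $\dk(\mC)>\binom{t}{2}$ unless they coincide; since the target has exactly $n!/t!=|\mC|$ elements, injectivity upgrades to bijectivity. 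Your key step --- that mixed pairs (one value on a position of $X$, one on a position of the omitted block) never contribute to the symmetric difference of inversion sets, precisely because the omitted positions are consecutive and every retained position lies entirely before or entirely after them --- is verified correctly and is where the specific shape of $X_{s,t}$ enters. Your argument is more self-contained (it bypasses \cref{lem:optpunct} and the averaging-bound machinery entirely, using only the definition of $t$-balanced and the inversion characterization of the metric) and it makes explicit a detail the paper's one-line proof leaves implicit, namely how the iteration simultaneously pins down positions at both ends; the paper's route, on the other hand, yields as a byproduct the structural fact that puncturings of $t$-balanced codes are again $t$-balanced, which is reused later in the paper.
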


\begin{proof}
Let $\mcI = (i_1, \ldots, i_{n-t})$ be an ordered sequence such that $1 \leq i_j \leq n$ and $i_j \neq i_l$ if $j \neq l$ and $X_{s,t}$ for some $0 \leq s \leq n-t$.  By iterating \cref{lem:optpunct} for each coordinate of $\mcI$ it follows that there exist a unique codeword $\sigma \in \mC$ such that,
for all $j \in X_{s,t}$,
$$\sigma(j) = \begin{cases}i_j  & \textup{if } j \leq s,\\
i_{j-t} &\textup{if } j > s,
    \end{cases}  $$ 
    as desired.
\end{proof}

We illustrate \cref{thm:optstructure} with the following example.

\begin{example}
    Let $A_4 \subseteq \mbS_4$ be the subgroup of even permutations:
     $$A_4 := \{[1234], [1342], [1423], [2143], [2314], [2431],[3124], [3241], [3412],  [4132], [4213], [4321]\}.$$
     Note that $A_4$ is a $2$-balanced code. 
As in \cref{thm:optstructure}, let $\mcI =(3,1)$ and consider the set $X_{1,2} = \{1,4\}$. By \cref{thm:optstructure}, there exists a unique codeword $\sigma \in A_4$ such that $\sigma(1) = 3$ and $\sigma(4) = 1$. In fact, this codeword is $\sigma = [3241]$. More generally, for any $\sigma, \tau \in A_4$, we must have $(\sigma(1), \sigma(4)) \neq (\tau(1), \tau(4))$. Similarly, from \cref{thm:optstructure} we can conclude that for any $\sigma, \tau \in A_4$, where $\sigma \neq \tau$, we have that $(\sigma(1), \sigma(2)) \neq (\tau(1), \tau(2))$ and $(\sigma(3), \sigma(4)) \neq (\tau(3), \tau(4))$.
\end{example}

We next prove the following lemma, which is the final ingredient needed to establish the main result of this section.

\begin{lemma}\label{lem:emo}
    Let $s \in [n-1]$ and $\sigma \in \mbS_n$ such that $\sigma(1) = s+1$ and $\sigma(n) \geq  s$.
    We have 
    $$|\wk(\sigma)| \leq \binom{n-1}{2} + 1.$$
Furthermore, equality holds if and only if $\sigma(n) = s$ and $\sigma(i)>\sigma(i+1)$ for all $1< i < n$.
\end{lemma}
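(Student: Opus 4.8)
The plan is to bound $\wk(\sigma)=|I_\sigma|$ by sorting the inversions of $\sigma$ according to whether they involve the first position, the last position, or neither, and then to read the equality case off the same classification. Since $\wk(\sigma)=|I_\sigma|\ge 0$, the absolute value is immaterial and I work with $\wk(\sigma)$ directly. Writing potential inversions as pairs of positions $(p,q)$ with $p<q$, where $(p,q)$ is an inversion when $\sigma(p)>\sigma(q)$, I partition the $\binom{n}{2}$ pairs into three blocks: $(A)$ the pairs $(1,q)$ with $2\le q\le n$; $(B)$ the pairs $(p,n)$ with $2\le p\le n-1$; and $(C)$ the pairs $(p,q)$ with $2\le p<q\le n-1$. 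Note that $(1,n)$ lies only in block $(A)$, so there is no double counting.

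First I would evaluate block $(A)$. Because $\sigma(1)=s+1$, a pair $(1,q)$ is an inversion exactly when $\sigma(q)\in\{1,\dots,s\}$; these $s$ values all occupy positions $\ge 2$, so block $(A)$ contributes exactly $s$ inversions for every admissible $\sigma$. For block $(B)$ I would use $\sigma(n)\ge s$ together with $\sigma(n)\neq s+1$ (forced by $\sigma(1)=s+1$), splitting into $\sigma(n)=s$ and $\sigma(n)\ge s+2$. A pair $(p,n)$ is an inversion iff $\sigma(p)>\sigma(n)$, so block $(B)$ equals the number of values exceeding $\sigma(n)$ located in positions $2,\dots,n-1$. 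All $n-\sigma(n)$ values exceeding $\sigma(n)$ lie outside position $n$, and the value at position $1$, namely $s+1$, exceeds $\sigma(n)$ iff $\sigma(n)=s$. Hence block $(B)$ equals $n-s-1$ when $\sigma(n)=s$, and equals $n-\sigma(n)\le n-s-2$ when $\sigma(n)\ge s+2$. Finally, block $(C)$ is at most $\binom{n-2}{2}$, the largest possible number of inversions among the $n-2$ interior positions.

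Summing the blocks and using Pascal's identity $\binom{n-1}{2}+1=\binom{n-2}{2}+(n-1)$, the case $\sigma(n)=s$ gives $\wk(\sigma)\le s+(n-s-1)+\binom{n-2}{2}=\binom{n-1}{2}+1$, whereas the case $\sigma(n)\ge s+2$ gives the strictly smaller bound $s+(n-s-2)+\binom{n-2}{2}=\binom{n-1}{2}$. This proves the inequality and shows that equality forces $\sigma(n)=s$. Conversely, once $\sigma(n)=s$ the contributions of $(A)$ and $(B)$ are pinned at $s$ and $n-s-1$, so equality is equivalent to block $(C)$ reaching its maximum $\binom{n-2}{2}$; this happens exactly when the interior values $\sigma(2),\dots,\sigma(n-1)$ are strictly decreasing, i.e.\ $\sigma(i)>\sigma(i+1)$ for all $1<i<n-1$, which yields the claimed characterization of equality.

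The step I expect to be most delicate is the boundary bookkeeping in block $(B)$: correctly recognizing that $\sigma(1)=s+1$ is counted among the values above $\sigma(n)$ precisely in the equality case $\sigma(n)=s$, which is exactly what makes $(B)$ jump from $n-s-2$ to $n-s-1$ and separates the two cases. A cleaner, equivalent route would be to apply \cref{prop:weightpunct} twice: peeling off position $1$ gives $\wk(\sigma)=\wk(\sigma|_{S_1})+s$, and since $\sigma|_{S_1}\in\mbS_{n-1}$ has last value $\psi_{s+1}(\sigma(n))\ge s$, peeling off its last position and bounding the resulting $(n-2)$-permutation by $\binom{n-2}{2}$ reproduces both the bound and the same equality analysis.
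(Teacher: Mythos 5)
Your three-block count is correct and is, in substance, the same decomposition the paper uses: block $(A)$ is exactly what \cref{lem:supportpuncture} extracts by puncturing position $1$ (contributing $s$), block $(B)$ is what \cref{prop:weightpunct} extracts by puncturing the last position (contributing $n-1-\tau(n-1)$, which is $n-s-1$ if $\sigma(n)=s$ and at most $n-s-2$ if $\sigma(n)\ge s+2$), and block $(C)$ is the weight of the residual permutation in $\mbS_{n-2}$, bounded by $\binom{n-2}{2}$. You note this equivalence yourself, so the route is not genuinely different; it is merely self-contained.

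The one point to flag concerns the equality case. What your argument actually (and correctly) establishes is: equality holds if and only if $\sigma(n)=s$ and $\sigma(2)>\sigma(3)>\cdots>\sigma(n-1)$, i.e.\ $\sigma(i)>\sigma(i+1)$ for $1<i<n-1$. This is \emph{not} the condition in the statement, which additionally demands $\sigma(n-1)>\sigma(n)$, and the two are not equivalent: for $s\ge 2$ the extremal permutation places the value $1$ at position $n-1$ and the value $s$ at position $n$, so $\sigma(n-1)=1<s=\sigma(n)$. Concretely, $\sigma=[3\ 4\ 1\ 2]$ with $n=4$ and $s=2$ satisfies the hypotheses and attains $\wk(\sigma)=4=\binom{3}{2}+1$, yet $\sigma(3)=1<2=\sigma(4)$. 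So your closing sentence, asserting that your condition ``yields the claimed characterization,'' is the only misstep --- and it is a misstep in your favor: the range $1<i<n-1$ you derived is the correct one, the stated range $1<i<n$ is an off-by-one (also present in the final line of the paper's own proof, where the condition on $\tau|_{S_{n-1}}\in\mbS_{n-2}$ only governs positions $2,\dots,n-1$ of $\sigma$), and your version is the one actually consistent with how the lemma is used later: the permutations $\sigma'_{n}$, $\sigma'_{n-2}$ displayed in the proof of \cref{thm:BOMB1} satisfy $\sigma'_n(t)=1<t=\sigma'_n(t+1)$. Rather than asserting agreement, your write-up should state explicitly that the equality condition in \cref{lem:emo} must be read with $1<i<n-1$.
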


\begin{proof}
 Since $\sigma(1) = s+1$, by \cref{lem:supportpuncture} we have $\wk(\sigma) = \wk(\sigma|_{S_1}) + s$. Let $\tau := \sigma|_{S_1}$. If $\sigma(n) = s$, then $\tau(n-1) = s$. If $\sigma(n) \geq s+2$, then $\tau(n-1) \geq s+1$. Thus $\tau(n-1) \geq s$ and by \cref{prop:weightpunct} we have
 \begin{equation}\label{eq:emo}
 \wk(\tau) = \wk(\tau|_{S_{n-1}}) + (n-1- \tau(n-1)) \leq \wk(\tau|_{S_{n-1}}) + n-1-s.\end{equation}
 Finally, note that $\tau|_{S_{n-1}} \in \mbS_{n-2}$, hence $\wk(\tau|_{S_{n-1}}) \leq \binom{n-2}{2}$. Putting everything together we get the following inequality
 \begin{align*}
     \wk(\sigma) &= \wk(\tau) + s \\
                 &\leq \wk(\tau|_{S_{n-1}}) + n-1-s + s\\
                 &\leq \binom{n-2}{2} + n-1\\
                 &= \binom{n-1}{2} +1.
\end{align*}

 This shows the first claim. Next, from \eqref{eq:emo} one can easily see that $\wk(\tau|_{S_n}) + (n-1- \tau(n-1)) = \wk(\tau|_{S_{n-1}}) + n-1-s$ if and only if $\tau(n-1) =s$, if and only if $\sigma(n) = s$. Finally, $\smash{\wk(\tau|_{S_{n-1}}) =  \binom{n-2}{2}}$ if and only if $\smash{\tau|_{S_{n-1}}(i) > \tau|_{S_{n-1}}(i+1)}$ for all $i \in [n-2]$, if and only if $\sigma(i) > \sigma(i+1)$ for all $1 < i < n$. This shows the second claim. 
\end{proof}

 We are now ready to prove the main result of this section, showing that $t$-balanced codes do not exist for $t \geq 3$. 

\begin{theorem}\label{thm:BOMB1}
   For all $n \in \NN$ and $ 3 \leq t \leq n$, there exists no $t$-balanced code $\mC \subseteq \mbS_n$. 
\end{theorem}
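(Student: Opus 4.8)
The plan is to reduce to the smallest non-trivial length $n=t+1$ and then contradict the minimum distance by exhibiting two codewords that are forced to be very close. The boundary case $n=t$ is immediate, since a $t$-balanced code would have $|\mC|=n!/t!=1<2$. For $n\ge t+2$, I would argue by reduction: from a hypothetical $t$-balanced code $\mC\subseteq\mbS_n$, \cref{lem:optpunct} yields a $t$-balanced code $\mC^{1,j}|_{S_1}\subseteq\mbS_{n-1}$, and iterating this $n-(t+1)$ times produces a $t$-balanced code in $\mbS_{t+1}$ (every intermediate code has cardinality at least $(t+1)!/t!=t+1\ge 2$). Hence it suffices to prove that no $t$-balanced code exists in $\mbS_{t+1}$.

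So fix $n=t+1$ and assume $\mC\subseteq\mbS_n$ is $t$-balanced; by \cref{cor:optmindist}, $\dk(\mC)=\binom{t}{2}+1=\binom{n-1}{2}+1$. Using right invariance \eqref{RI} I would replace $\mC$ by $\mC\rho^{-1}$ for some $\rho\in\mC$, so that $\epsilon\in\mC$ while $\mC$ remains $t$-balanced. Applying \cref{thm:optstructure} with $s=n-t$ and with $s=0$ shows that $\sigma\mapsto\sigma(1)$ and $\sigma\mapsto\sigma(n)$ are bijections from $\mC$ onto $[n]$. Since $\epsilon(1)=1$ and $\epsilon(n)=n$, each non-identity codeword $\sigma$ has $\sigma(1)\in\{2,\dots,n\}$ and $\sigma(n)\in\{1,\dots,n-1\}$, each value occurring exactly once.

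The heart of the argument is to rigidify $\mC$ completely. For a non-identity $\sigma$ we have $\wk(\sigma)=\dk(\sigma,\epsilon)\ge\binom{n-1}{2}+1$. Writing $\sigma(1)=s+1$ and applying \cref{lem:emo}, whenever $\sigma(n)\ge s$ we obtain $\wk(\sigma)\le\binom{n-1}{2}+1$, so equality holds; the equality case forces $\sigma(n)=s=\sigma(1)-1$ together with a decreasing run on the inner coordinates, determining $\sigma$ uniquely as the near-reversal $\rho_s:=[\,s+1,\,n,\,n-1,\dots,s+2,\,s-1,\dots,1,\,s\,]$. Contrapositively, $\sigma(n)\le\sigma(1)-1$ for every non-identity codeword. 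Since the first entries are exactly $2,\dots,n$ and the last entries exactly $1,\dots,n-1$, the differences satisfy $\sum(\sigma(1)-\sigma(n))=n-1=t$; being a sum of $t$ integers each at least $1$, every summand equals $1$, so $\mC=\{\epsilon,\rho_1,\dots,\rho_t\}$. Finally I would contradict the minimum distance: every $\rho_s$ lies at distance exactly $n-2$ from the reversal permutation (it has $\binom{n}{2}-\wk(\rho_s)=n-2$ non-inversions), and a direct count gives $\dk(\rho_1,\rho_t)=2n-6=2t-4$. As $\binom{t}{2}+1-(2t-4)=(t^2-5t+10)/2>0$, this is strictly below $\binom{t}{2}+1=\dk(\mC)$ for $t\ge 3$ (where $\rho_1\ne\rho_t$), the desired contradiction.

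I expect the rigidity step to be the main obstacle. The delicate point is the equality analysis of \cref{lem:emo}: one must correctly characterize all permutations attaining $\wk(\sigma)=\binom{n-1}{2}+1$ under the hypothesis $\sigma(n)\ge\sigma(1)-1$ --- these are precisely the near-reversals $\rho_s$, with a decreasing block on the inner coordinates --- and then combine this with the bijectivity of $\sigma\mapsto\sigma(1)$ and $\sigma\mapsto\sigma(n)$ and the averaging identity $\sum(\sigma(1)-\sigma(n))=t$ to pin $\mC$ down exactly. Once the codewords are explicit, the contradiction is only a short distance count; the lone subtlety there is that bounding $\dk(\rho_1,\rho_t)$ through the reversal via the triangle inequality only yields $2(n-2)=2t-2$, which is not sharp enough at $t=3$, so one needs the exact value $2t-4$.
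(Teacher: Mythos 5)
Your proof is correct, and it follows the same overall strategy as the paper: puncture down to permutations of $t+1$ symbols, use the equality case of \cref{lem:emo} to force every non-identity codeword to be a ``near-reversal'' $\rho_s$, and then contradict the minimum distance by computing the distance between two such near-reversals (your pair $(\rho_1,\rho_t)$ and the paper's pair, $\sigma_{t-2},\sigma_t$ in its indexing, both give the bound $2t-4<\binom{t}{2}+1$). The one genuinely different ingredient is how you establish the rigidity. The paper proves its Claim~1 by induction on $s$: the uniqueness statement of \cref{thm:optstructure} forces $\sigma_l(n)\notin\{\sigma_s(n):s<l\}$, which supplies the hypothesis $\sigma(n)\ge s$ needed to invoke \cref{lem:emo} at each step. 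You instead use both bijections $\sigma\mapsto\sigma(1)$ and $\sigma\mapsto\sigma(n)$ simultaneously and observe that \cref{lem:emo} already yields $\sigma(1)-\sigma(n)\ge 1$ unconditionally for every non-identity codeword, so the identity $\sum(\sigma(1)-\sigma(n))=t$ over the $t$ non-identity codewords pins every difference to exactly $1$ in one stroke. This is a clean, induction-free alternative for the key step; the paper's version is slightly more local but requires tracking the last entries one at a time. Your explicit reduction to $n=t+1$ (and the remark that $n=t$ is vacuous since $|\mC|=1$) is also a tidy packaging of what the paper does implicitly by puncturing to $[n]\setminus[n-t-1]$ inside the distance computations.
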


\begin{proof}
    Assume that a $t$-balanced code $\mC \subseteq \mbS_n$ exists, where $t \geq 3$. Without loss of generality, $\epsilon \in \mC$ by right-invariance; see~\eqref{RI}. By \cref{cor:optmindist} we have $\smash{\dk(\mC) = \binom{t}{2}+1}$. 
    From \cref{thm:optstructure}, for $0 \leq s \leq t$ there exists a unique codeword $\sigma_s \in \mC$ such that 
    $$\sigma_s(i) = \begin{cases} i & \textup{if } 1 \leq i \leq n-t-1,\\
    n-t+s & \textup{if }i = n-t. \end{cases}$$

    Our goal is to show that, for all $0 \leq s \leq t$, the entries $\sigma_s(i)$,
    for $n-t+1 \leq i \leq n$, are fully determined. 
    In turn, we will use said $\sigma_s$ to establish a contradiction on the minimum distance of our $t$-balanced code.

First consider $\sigma_0$. By \cref{thm:optstructure}, there exists a unique codeword $\sigma \in \mC$ such that $\sigma(i) = i$ for $1 \leq i \leq n-t$. Since $\epsilon \in \mC$ satisfies this property, it must be that $\sigma_0 = \epsilon$. 

We continue the proof by establishing two claims. 

\vspace{.1cm}

\textbf{Claim 1.} For $1 \leq s \leq t$ we have $\sigma_s(n) = n-t+s-1$ and $\sigma_s(i) > \sigma(i+1)$ for $n-t+1 \leq i \leq n-1$.

\vspace{.1cm}

\textit{Proof of Claim 1.} We proceed by induction on $s$ and start with $s=1$. Since $\smash{\dk(\mC) = \binom{t}{2}+1}$, we must have $\smash{\dk(\epsilon, \sigma_1) \geq \binom{t}{2}+1}$. 
Let $\smash{\epsilon' := \epsilon|_{[n]\backslash[n-t-1]}}$ and $\smash{\sigma_1' := \sigma_1|_{[n]\backslash[n-t-1]}}$.
Since $\sigma_1(i) = \epsilon(i)$ for $1 \leq i \leq n-t-1$, by applying \cref{prop:distpuncS1} iteratively we obtain

\begin{equation}\label{eq:claim1}
\dk(\epsilon, \sigma_1) = \dk(\epsilon', \sigma_1') = |I_{\sigma_1'}| \geq \binom{t}{2}+1.
\end{equation}

Furthermore, $\sigma_1' \in \mbS_{t+1}$, $\sigma_1'(1) = 2$, and $\sigma_1'(t+1) \geq 1$. Hence by \cref{lem:emo} we must have $|I_{\sigma_1'}| \leq \binom{t}{2}+1$.
Together with \eqref{eq:claim1}, this implies that $|I_{\sigma_1'}| = \binom{t}{2}+1$. Thus, once again by \cref{lem:emo}, $\sigma_1'(t+1) = 1$ and $\sigma_1'(i) > \sigma_1'(i+1)$ for all $2 \leq i \leq t$. The latter furthermore implies that $\sigma_1(n) = n-t$ and $\sigma_1(i) > \sigma_1(i+1)$ for all $n-t+1 \leq i \leq n-1$, proving the base case of Claim~1. 

Now suppose that the claim holds true for all $s \leq l-1$, for some $l \leq n$.
Consider $\sigma_{l}$. 
First note that for all $s \leq l$ we have $\sigma_s(i) = \sigma_l(i)$ for $i \leq n-t-1$, therefore by \cref{thm:optstructure} we have $\sigma_s(n) \neq \sigma_l(n)$.  By the induction hypothesis, $\sigma_s(n) = n-t+s-1$ for all $s < l$, hence $\sigma_l(n) > n-t+(l-1) -1 = n-t+l -2$.

Similarly to the base case, we reduce $\sigma_l$ to $\sigma_l' := \sigma_l|_{[n] \setminus [n-t-1]}$ and can easily show that $|I_{\sigma_l'}| = \binom{t}{2}+1$. By \cref{lem:emo}, we get that $\sigma_l'(t+1) = l$ and $\sigma_l'(i) > \sigma_l'(i+1)$ for $2 \leq i \leq t$. Thus $\sigma_l(n) = n-t+l-1$ and $\sigma_l(i) > \sigma_l(i+1)$ for $n-t+1 \leq i \leq n-1$,  establishing Claim 1.

\vspace{.1cm}

\vspace{.1cm}
\textbf{Claim 2:} $\dk(\sigma_n, \sigma_{n-2}) < \binom{t}{2}+1$ when $t \geq 3$. 

\vspace{.1cm}

\textit{Proof of Claim 2.} 
Let $\smash{\sigma'_j := \sigma_j|_{[n] \backslash [n-t-1]}}$ for $j \in \{n-1, n\}$.
Since $\sigma_n(i) = \sigma_{n-2}(i)$ for $1 \leq i \leq n-t-1$, by \cref{prop:distpuncS1} we have 
$\dk(\sigma_n, \sigma_{n-2}) = \dk(\sigma'_n, \sigma'_{n-2})$. Furthermore, by Claim 1 we know that
\begin{align*}
    \sigma'_{n-2} &= \begin{bmatrix}
        t-1 & t+1 & t & t-3 & \cdots & 1 & t-2
    \end{bmatrix},\\
    \sigma'_n &= \begin{bmatrix}
        t+1 & t-1 & t-2 & t-3 & \cdots & 1 & t
    \end{bmatrix}.
\end{align*}
Via a simple counting argument, we get that a minimum of at most $1 + (t+1 - 3) + (t+1-4)$ consecutive transpositions are needed to go from $\sigma'_n$ to $\sigma'_{n-2}$. Hence if $t \geq 3$,
$$\dk(\sigma'_n, \sigma'_{n-2}) \leq 2t-4 < \binom{t}{2} +1,$$
 where the second  inequality can be shown via a straightforward calculus argument. This proves Claim~2. 

The fact that both $\{\sigma_n, \sigma_{n-2}\} \subseteq \mC$ and $\dk(\sigma_n, \sigma_{n-2}) < \binom{t}{2}+1$ hold contradicts the minimum distance of $\mC$. Hence no $t$-balanced code $\mC$ can exist for $t \geq 3$. 
\end{proof}

The above result tells us that $t$-balanced codes exist if and only if $t=2$. We conclude the paper by 
characterizing $2$-balanced codes and showing that they must be the translate of the even permutation subgroup. 

\begin{theorem}\label{thm:2MDScharac}
    Let $\mC \leq \mbS_n$ be a $2$-balanced code and let $A_n \leq \mbS_n$ be the even permutation subgroup. Then 
    $$\mC = \sigma \circ A_n \textup{ for some } \sigma \in \mbS_n.$$
\end{theorem}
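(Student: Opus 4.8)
The plan is to first normalize the problem and then exploit the rigidity provided by \cref{thm:optstructure} in the case $t=2$. By \cref{cor:optmindist}, any $2$-balanced code has minimum distance exactly $\binom{2}{2}+1 = 2$. Using right-invariance~\eqref{RI}, I would fix an arbitrary codeword $\tau_0 \in \mC$ and replace $\mC$ by $\mC\tau_0^{-1}$; this is again $2$-balanced and contains $\epsilon$. Since $A_n$ is a normal subgroup of $\mbS_n$, every coset of $A_n$ is simultaneously a left and a right coset, so it suffices to prove that a $2$-balanced code containing $\epsilon$ equals $A_n$; the general statement $\mC = \sigma \circ A_n$ then follows by translating back (taking $\sigma = \tau_0$). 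Recall also that $\wk(\sigma)$ equals the number of inversions of $\sigma$, so its parity is the sign of $\sigma$, and $A_n$ is precisely the set of permutations of even weight.

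The core idea is to view $\mbS_n$ as the vertex set of the graph $G$ whose edges join permutations at Kendall distance $1$; this is the Cayley graph of $\mbS_n$ with respect to the consecutive transpositions, so $G$ is connected and bipartite, with its two colour classes being exactly $A_n$ and its complement. Since $\dk(\mC)=2$, the code $\mC$ is an independent set in $G$, and I would upgrade this to the sharper statement that $\mC$ contains \emph{exactly one} endpoint of every edge of $G$. This is where \cref{thm:optstructure} enters: for $t=2$ and a fixed $s$, the set $X_{s,2}$ leaves exactly the two consecutive positions $s+1,s+2$ unspecified, and \cref{thm:optstructure} asserts that for each assignment $\mcI$ of distinct values to the remaining $n-2$ positions there is a unique codeword completing it. The two possible completions differ only by swapping the values in positions $s+1$ and $s+2$, hence are at Kendall distance $1$ and form an edge of $G$; uniqueness says $\mC$ meets this edge in exactly one vertex. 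Letting $\mcI$ range over all assignments and $s$ over $0,\ldots,n-2$ sweeps out every edge of $G$ exactly once, where $s$ selects which consecutive transposition is used, so indeed every edge of $G$ has exactly one endpoint in $\mC$.

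With this in hand the conclusion is immediate: assigning one colour to the vertices of $\mC$ and the other to the rest yields a proper $2$-colouring of $G$, since no edge can have both endpoints inside $\mC$ nor both outside it. A connected bipartite graph admits a unique proper $2$-colouring up to swapping the two colours, and that colouring is the parity bipartition; as $\epsilon \in \mC$ has even weight, $\mC$ must coincide with the even class, i.e.\ $\mC = A_n$, and translating back by $\tau_0$ gives $\mC = \sigma \circ A_n$. I expect the main obstacle to be the bookkeeping in the middle step: one must check carefully that the two permutations completing a fixed $X_{s,2}$-pattern are genuinely at Kendall distance $1$, i.e.\ that they differ by a single consecutive transposition and thus form an edge of $G$, and that letting $(\mcI,s)$ vary realizes each edge of $G$ once and only once. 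Establishing this precise dictionary between \cref{thm:optstructure} and the edge set of $G$, rather than the purely combinatorial colouring step, is the crux of the argument.
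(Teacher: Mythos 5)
Your argument is correct, and it takes a genuinely different route from the paper's. The paper proves the statement by induction on $n$: it punctures on the first coordinate, applies the induction hypothesis to each slice $\mC^{1,i}|_{S_1}$ (which is $2$-balanced by \cref{lem:optpunct}), and then runs a careful parity bookkeeping via \cref{prop:weightpunct} and an inner induction on $i$ to show every slice consists of even-weight permutations; this requires a base case $n=3$ by exhaustive search. You instead avoid induction entirely: for $t=2$ the windows $X_{s,2}$ omit exactly the two consecutive positions $s+1,s+2$, the two completions of a fixed pattern differ by one adjacent swap and hence form an edge of the Cayley graph $G$ of $\mbS_n$ with respect to consecutive transpositions, and the existence-and-uniqueness assertion of \cref{thm:optstructure} says $\mC$ meets every such edge in exactly one vertex. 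Since every edge of $G$ arises this way (an edge determines a unique $s$ and a unique $\mcI$), the partition $\{\mC,\,\mbS_n\setminus\mC\}$ is a proper $2$-colouring of the connected bipartite graph $G$, whose unique bipartition is by parity of the inversion number; with $\epsilon\in\mC$ this forces $\mC=A_n$. The two steps you flag as delicate do check out: permutations agreeing outside positions $s+1,s+2$ with swapped values there are at Kendall distance exactly $1$, and the sign homomorphism makes $G$ bipartite with parts $A_n$ and its complement. What your approach buys is brevity and a conceptual explanation of why the answer is a coset of $A_n$ (it is the unique $2$-colouring of a connected bipartite graph); what the paper's approach buys is a more self-contained, elementary parity computation that does not appeal to any graph-theoretic facts, at the cost of a base case and more case analysis. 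Note that both proofs ultimately rest on the same structural input, \cref{lem:optpunct}, since \cref{thm:optstructure} is obtained by iterating it.
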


\begin{proof}
It suffices to show that if $\epsilon \in \mC$, then $\mC = A_n$. In fact, if $\epsilon \notin \mC$ then there exist $\sigma \in \mbS_n$ such that $ \epsilon \in \sigma \circ \mC$, which implies that $\sigma \circ \mC = A_n$. 

We proceed by induction on $n$. If $n=3$ and $\epsilon \in \mC$ it is easy to show via exhaustive search that $\mC$ is $2$-balanced if and only if $\mC = A_3$.
Now let $n \ge 4$ and assume that the claim is true for all $n' \leq n-1$. 
Let $\mC \subseteq \mbS_n$ be a $2$-balanced containing the identity, thus $|\mC| = n!/2$ and $\dk(\mC) = 2$, by \cref{cor:optmindist}. 
Let $\smash{S_1 := [n] \setminus \{1\}}$ and $\smash{\mC^i := \mC^{1,i}|_{S_1} \subseteq \mbS_{n-1}}$, where $1 \leq i \leq n$. By \cref{lem:optpunct},  $\mC^i $ is a $2$-balanced. Hence $\smash{\mC^i = \sigma_i \circ A_{n-1}}$ if $\epsilon \notin \mC^i$ for some $\sigma_i \in \mbS_{n-1}$,  or $\mC^i = A_{n-1}$  if $\epsilon \in \mC^i$. We next show that $\epsilon \in \mC^i$ if and only if $i$ is odd. 

First, consider the case $i = 1$. Since $\epsilon \in \mC$, $\mC^1$ is a $2$-balanced code containing the identity. Therefore, by the induction hypothesis, 
we must have that $\mC^1 = A_{n-1}$. Moreover, by \cref{prop:weightpunct}, $\wt(\sigma|_{S_1}) = \wt(\sigma)$ for all $\sigma \in \mC^1$. Thus if $\sigma \in \mC$ and $\sigma(1) = 1$, then $\sigma$ has even weight. Furthermore, since $|\mC_1| = (n-1)!/2$ and 
$$|\{\sigma \in \mbS_n \, : \, \sigma(1) = 1 \textup{ and } \wk(\sigma) \textup{ is even }\}| = (n-1)!/2,$$
it must be that $\sigma \in \mC^{1,1}$ if and only if $\sigma(1) = 1$ and $\sigma$ has even weight.

Now consider $\mC^2 \subseteq \mbS_{n-1}$. By induction hypothesis, $\mC^2$ consists of either all even weight permutations (if $\mC^2 = A_{n-1}$) or all odd weight permutations (if $\mC^2 = \sigma \circ A_{n-1} \neq A_{n-1}$). Suppose that $\mC^2 = A_{n-1}$. Then $\epsilon \in \mC^2$ which implies 
$$\tau := \begin{bmatrix}
    2 & 1 & 3 & \cdots & n
\end{bmatrix} \in \mC.$$ 
However, since $\epsilon \in \mC$ and $\dk(\epsilon, \tau) = 1$, we arrive at a contradiction on the minimum distance of $\mC$. Therefore $\mC^2$ consists of all odd weight permutations in $\mbS_{n-1}$. Furthermore, by \cref{prop:weightpunct} we get $\mC^{1,2} = \{\sigma \in \mbS_n \, : \, \sigma(1) = 2 \textup{ and } \wk(\sigma) \textup{ is even}\}$.

We now proceed by induction on $1 \le i \le n$ to show that 
$$\mC^i = \begin{cases}
    \{\sigma \in \mbS_{n-1} \, :  \wk(\sigma) \textup{ is even}\} & \textup{ if } i \textup{ is odd,}\\
    \{\sigma \in \mbS_{n-1} \, :  \wk(\sigma) \textup{ is odd}\} & \textup{ if } i \textup{ is even.}
\end{cases} $$
The base case of the induction ($i=1$) was shown previously. Assume that the statement holds true for all $j \leq i-1$.
Suppose first that $i$ is odd. Assume that $\mC^i$ consists of all odd weight elements. Then, $$\tau := \begin{bmatrix} i-1 & 1 & \cdots & n-1 \end{bmatrix} \in \mC^i,$$ since $\wt(\tau) = i-2,$ which is odd. This implies $$\alpha := \begin{bmatrix} i & i-1 & 1 & \cdots & i-2& i+1 & \cdots & n  \end{bmatrix} \in \mC.$$ By the induction hypothesis, we must have that $$\gamma := \begin{bmatrix}
    i-1 & i & 1 & \cdots & i-2 & i+1 & \cdots &n 
\end{bmatrix} \in \mC,$$ since $\gamma(1) = i-1$ and $\wt(\gamma) = 2(i-2)$, which is even. However, $\dk(\alpha, \gamma) =1$ gives a contradiction on the minimum distance of the code.
Thus $\mC^i$ must contain all odd weight elements of $\mbS_{n-1}$ and $\mC^{1,i} = \{\sigma \in \mbS_n \, : \sigma(1) = i \textup{ and } \wk(\sigma) \textup{ is even }\}$.

If $i$ is even, a similar proof holds. 
All of this shows that $\mC^{1,i}$ consists of only even weights for $1 \leq i \leq n$. Therefore $\mC$ contains only even weight permutations, which implies $\mC = A_n$. 
\end{proof}

Combining \cref{thm:BOMB1} and \cref{thm:2MDScharac} gives a full characterization of $t$-balanced codes for all $n \in \NN$ and $2 \leq t \leq n$. In addition, our result give a characterization of MDS codes as defined in \cite{jiangcorrecting10}, when $|C| = \smash{\frac{n!}{t!}}$. It remains however an open problem to characterize the codes that achieve the bound \eqref{eq:singl1}, i.e., codes such that $|\mC| > n!/t!$ and $\dk(\mC) = \binom{t}{2}$.

\section{Conclusion}

In this paper we proved a new bound on the maximum cardinality of codes under the Kendall-$\tau$ metric, with given length and minimum distance. We then classified the codes whose parameters achieve this bound with equality, called $t$-balanced codes. More precisely, we proved $t$-balanced codes only exist when $t=2$, in which case those codes must be translates of the even subgroup of the symmetric group of length $n$. Furthermore several open problems arose on the way, which we briefly outline.

\begin{enumerate}
    \item Do Kendall-$\tau$ balls of diameter $t$ in $\mbS_n$ larger than Kendall-$\tau$ cubes in $\mbS_n$ of the same diameter? This problem also relates to an open problem formulated in \cite{sarit} of characterizing the optimal anticodes with given diameter. 

    \item Give a full characterization of MDS codes. More precisely, determine for which parameter set there exist a code $\mC \subseteq \mbS_n$ such that $\smash{|\mC| > \frac{n!}{t!}}$ and $\smash{\dk(\mC) = \binom{t}{2}}$.
\end{enumerate}

\bibliographystyle{plain}
\bibliography{Kendall}

\end{document}